\documentclass[12pt]{article}
\usepackage{amsmath}
\usepackage{graphicx}
\usepackage{pslatex}
\usepackage{amssymb,amsfonts,latexsym,amstext,amsmath,amsthm,enumerate}
\textwidth 16cm
\parskip .2cm
\textheight 22.5cm \topmargin -1cm \oddsidemargin = 0cm
\evensidemargin = 0cm

\newtheorem{theorem}{Theorem}
\newtheorem{lemma}[theorem]{Lemma}
\newtheorem{corollary}[theorem]{Corollary}

\newtheorem{remark}[theorem]{Remark}
\newtheorem{definition}[theorem]{Definition}

\newcommand\Zp{\makebox[.75em]{$1$}}
\newcommand\Zm{\makebox[.75em]{$-$}}
\newcommand\Zz{\makebox[.75em]{$0$}}

\newcommand{\BR}{\{e^{\frac{2\pi i}{3}},e^{-\frac{2\pi i}{3}}\}}

\newenvironment{myind}[1]%
{\begin{list}{}%
         {\setlength{\leftmargin}{#1}}%
         \item[]%
}
{\end{list}}
\begin{document}
\title{On unit weighing matrices with small weight}
\author{Darcy Best, Hadi Kharaghani\footnotemark, Hugh Ramp\\
Department of Mathematics \& Computer Science\\
University of Lethbridge\\
Lethbridge, Alberta, T1K 3M4\\
Canada\\
 darcy.best@uleth.ca, kharaghani@uleth.ca, hugh.ramp@uleth.ca }

\maketitle
\begin{abstract}
The structure of unit weighing matrices of order $n$ and  weights  2, 3 and 4 are studied.  
We show that the number of inequivalent unit weighing matrices $UW(n,4)$ depends on the
 number of decomposition of $n$ into sums of non-negative multiples of some specific positive
 integers. Two interesting sporadic cases are presented in order to demonstrate the complexities involved 
in the classification of weights larger than 4. \end{abstract}

\footnotetext[1]{Supported by an NSERC-Group Discovery Grant. Corresponding author.}

\section{Introduction}
A weighing matrix of order $n$ and weight $p$, denoted $W(n,p)$, is a  $(0,\pm 1)-$matrix of order 
$n$ such that $WW^T =pI_n$.  For $n = p$, the matrix is a {\it Hadamard matrix}. 
The natural extension of Hadamard matrices 
to the unit Hadamard matrices, i.e., the Hadamard matrices whose entries are complex numbers on the unit circle, has inspired the idea of unit weighing matrices, 
which is defined as follows.

\begin{definition}
 A  square matrix $W=[w_{ij}]$, $w_{ij}\in \mathbb{T} \cup \{0\}$, of order $n$ and weight $p$, 
denoted $UW(n,p)$, for which $WW^*=pI_n$, where $^*$ is the Hermitian transpose and $\mathbb{T}$ is the set of 
all complex numbers of absolute value one, is called a {\bf unit weighing matrix}.\end{definition}

Weighing matrices have been studied quite extensively and there are over 200 reviewed papers
on these matrices in the literature. We refer the interested reader to \cite{ck-crc} for general study of weighing matrices.
The structure of weighing matrices with large weights (but not Hadamard) has been studied in \cite{craigen}. 
All real Hadamard matrices (i.e. weighing matrices of full weights) have been classified up to order 32 (see \cite{hk32,H32}).
A lot of interest has been shown in circulant weighing matrices and a fair number of orders and weights have been classified for 
this subset of weighing matrices, see for example \cite{ss} and the references therein. Circulant weighing matrices of 
small weights are studied by Strassler \cite {s1,s2,s3} 
and in Epstein's Master's thesis \cite {e1}.

Unit weighing matrices appear quite naturally in the study of mutually unbiased bases in diverse areas of quantum-informatic applications 
(see \cite{durt}) and in signal processing (see \cite{adam}).
In general, unit weighing matrices have very complex structures and are not easy to classify. 
For example, in the special case of unit Hadamard matrices, the full structure up to order 5 \cite{H5} 
is known and a classification of order 6 has shown to be stubbornly complicated \cite{t}. 
Naturally, in general the study of the structure of unit weighing matrices is much harder.  

In this paper, the complete structure 
of unit weighing matrices of weight less than 5 is provided. Two examples to show the complexities involved in weight 5 are also included.

Throughout the paper, we use ``$-$'' to denote ``$-1$'' and use {\it unimodular number} for the elements of $\mathbb{T}$.

\section{Equivalence}\label{equiv}

\begin{theorem} \label{th:equiv}
For a given unit weighing matrix, applying any of the following operations will result in a unit weighing matrix:
\begin{enumerate}
 \item[(T1)] Permuting the rows
 \item[(T2)] Permuting the columns
 \item[(T3)] Multiplying a row of the matrix by a number in  $\mathbb{T}$
 \item[(T4)] Multiplying a column of the matrix by a  number in $\mathbb{T}$
 \item[(T5)] Taking the Hermitian transpose
 \item[(T6)] Conjugating every entry in the matrix
\end{enumerate}

\end{theorem}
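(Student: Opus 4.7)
The plan is to verify each of the six operations separately by a direct matrix computation, showing both that the entries remain in $\mathbb{T}\cup\{0\}$ and that the defining identity $WW^{*}=pI_{n}$ is preserved. Two simple algebraic facts will do all the work: permutation matrices $P$ satisfy $PP^{T}=I$, and diagonal matrices $D$ whose nonzero diagonal entries lie in $\mathbb{T}$ satisfy $DD^{*}=I$.

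For (T1) and (T2), I would write the new matrix as $PW$ and $WP$ respectively and compute $(PW)(PW)^{*}=PWW^{*}P^{T}=pPP^{T}=pI_{n}$ and $(WP)(WP)^{*}=WPP^{T}W^{*}=WW^{*}=pI_{n}$. For (T3) and (T4), the new matrix is $DW$ or $WD$ with $D$ a unimodular diagonal matrix, and the same cancellation $DD^{*}=I$ gives the result. The entry condition is immediate in all four cases since permutations only rearrange entries and multiplication by elements of $\mathbb{T}$ leaves $\mathbb{T}\cup\{0\}$ invariant.

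Operations (T5) and (T6) need an extra observation. For (T6), conjugating $WW^{*}=pI_{n}$ entrywise gives $\overline{W}\,W^{T}=pI_{n}$, which is precisely $\overline{W}(\overline{W})^{*}=pI_{n}$. For (T5), I need $W^{*}W=pI_{n}$, not just $WW^{*}=pI_{n}$; here I would note that since $p>0$, the matrix $W$ is square and invertible with $W^{-1}=\tfrac{1}{p}W^{*}$, so $W^{*}W=pI_{n}$ follows automatically, and hence $(W^{*})(W^{*})^{*}=W^{*}W=pI_{n}$. The entry condition for these two operations is obvious since $\mathbb{T}\cup\{0\}$ is closed under complex conjugation.

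There is no real obstacle: the proof is a sequence of one-line verifications. The only thing worth flagging is the tacit use of squareness in (T5), which is built into the definition of $UW(n,p)$ and guarantees the passage from $WW^{*}=pI_{n}$ to $W^{*}W=pI_{n}$.
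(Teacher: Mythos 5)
Your proof is correct and complete. The paper states this theorem without any proof, treating it as a routine verification, and your argument is exactly the standard one that would be supplied: conjugation by permutation matrices and unimodular diagonal matrices for (T1)--(T4), entrywise conjugation of the identity $WW^{*}=pI_{n}$ for (T6), and the passage from $WW^{*}=pI_{n}$ to $W^{*}W=pI_{n}$ via invertibility of the square matrix $W$ for (T5) --- the one step that genuinely requires a remark, and which you correctly flag.
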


Note that by applying $(T5)$ followed by $(T6)$, we have that the transpose of a unit weighing matrix is also a unit weighing matrix.

\begin{definition}
 Two unit weighing matrices $W_1$ and $W_2$ are {\bf equivalent} if one can be obtained from the other 
by performing a finite number of  operations $(T1)$, $(T2)$, $(T3)$ and $(T4)$ to it.
\end{definition}

Note that $(T5)$ and $(T6)$ are excluded from the definition in order to maintain consistency between the 
definitions of equivalence for Hadamard matrices and weighing matrices.

Inequivalent (real) weighing matrices have been studied quite extensively for many special cases. 
All weighing matrices having weights less than 6 were classified in \cite{seberry} (see Remark \ref{rem:weight-5-fix} below). 

A lot of interest has been shown in circulant weighing matrices, see \cite{ss} and the reference therein. A fair number of orders and weights have been classified for 
this subset of weighing matrices. Circulant weighing matrices of small weights are extensively studied by Strassler \cite {s1,s2,s3} 
and Epstein's Master's thesis \cite {e1}.

In order to study the number of inequivalent unit weighing matrices, we define the following ordering, $\prec$, 
on the elements of $\mathbb{T}\cup \{0\}$.   \begin{enumerate}
   \item $e^{i\theta} \prec 0$ for all $\theta$
   \item $e^{i\theta} \prec e^{i\phi} \iff 0 \leq \theta < \phi < 2\pi$
  \end{enumerate}

\begin{definition}
 We say that a unit weighing matrix, $UW(n,w)$ is in {\bf standard form} if the following conditions apply:

 \begin{enumerate}
  \item[(S1)] The first non-zero entry in each row is 1.
  \item[(S2)] The first non-zero entry in each column is 1.
  \item[(S3)] The first row is $w$ ones followed by $n-w$ zeros.
  \item[(S4)] The rows are in lexicographical order according to $\prec$.
 \end{enumerate}
\end{definition}

To clarify the ordering in $(S4)$ (say we are interested in row $i$ and row $j$), we denote row $i$ by $R_i=\left(a_1,a_2,\cdots,a_n\right)$ and row $j$ by $R_j=\left(b_1,b_2,\cdots,b_n\right)$ and let $k$ be the smallest index such that $a_k \neq b_k$. Then $R_i < R_j \iff a_k \prec b_k$.

\begin{theorem}\label{th:standard}
 Every unit weighing matrix is equivalent to a unit weighing matrix that is in the standard form.
\begin{proof}
 Let $W$ be a unit weighing matrix of weight $w$. Let $r_i \in \mathbb{T}$ be the first non-zero entry in row $i$. Multiply each row $i$ by $\overline{r_i} \in \mathbb{T}$, so that the condition $(S1)$ holds. For column $j$, let $c_j \in \mathbb{T}$ be the first non-zero entry  in the transformed matrix. Multiply each column $j$ by $\overline{c_j} \in \mathbb{T}$, which satisfies condition $(S2)$. Permute the columns so that the first row has $w$ non-zeros (each of which must be one since $(S2)$ is satisfied) followed by $n-w$ zeros, which satisfies $(S3)$. Finally, sort the rows of the matrix lexicographically with the ordering $\prec$. Note that the first row will not have moved since it is the least lexicographic row in the matrix. The transformed matrix now satisfies condition $(S4)$, and hence, is in standard form.
\end{proof}
\end{theorem}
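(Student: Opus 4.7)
The plan is a direct, constructive proof: take an arbitrary unit weighing matrix $W$ and produce, via a finite sequence of operations $(T1)$--$(T4)$, an equivalent matrix in standard form. The operations would be applied in the order $(T3)$, $(T4)$, $(T2)$, $(T1)$, addressing conditions $(S1)$ through $(S4)$ in turn.

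Explicitly, the first step is to normalize rows via $(T3)$: for each row $i$, let $r_i\in\mathbb{T}$ denote its first non-zero entry and multiply row $i$ by $\overline{r_i}\in\mathbb{T}$. After this, every row's leading non-zero entry equals $1$, so $(S1)$ holds. The second step normalizes columns via $(T4)$ in the analogous way: for each column $j$, let $c_j\in\mathbb{T}$ denote its (current) first non-zero entry and multiply column $j$ by $\overline{c_j}$, obtaining $(S2)$. The third step uses the observation that once $(S2)$ is satisfied, any non-zero entry in the first row is automatically $1$ (since such an entry is itself the first non-zero of its column); a column permutation via $(T2)$ then moves the $w$ ones of row $1$ to the first $w$ positions, establishing $(S3)$. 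The fourth step applies $(T1)$ to sort the rows lexicographically under $\prec$. Since the first row is now $w$ ones followed by $n-w$ zeros, it is lexicographically minimal and remains in place, while the remaining rows are reordered so that $(S4)$ holds.

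The subtle point --- which I expect to be the main obstacle in making the proof fully rigorous --- is to verify that each step preserves the conditions already achieved. In particular, the column rescalings of Step 2 could in principle alter the leading entry of a row whose first non-zero lies in a column whose own first non-zero lies in a higher row, and thereby break $(S1)$; likewise, the row sort in Step 4 could change which row supplies the first non-zero entry of some column and thus break $(S2)$. A complete argument must either show that these conflicts do not in fact arise, or modify the algorithm so that all four conditions can be met at once --- for instance, by exploiting the fact that the bipartite ``leading-entry graph'' linking each row to the column of its first non-zero (and each column to the row of its first non-zero) is a forest, since following its edges strictly decreases row and column indices; this acyclicity lets one solve the abelian system of rescaling equations in $\mathbb{T}$ for rows and columns simultaneously.
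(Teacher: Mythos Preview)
Your proposal is essentially identical to the paper's proof: the paper performs exactly the same four steps (row rescaling, column rescaling, column permutation, row sort) in the same order and with the same one-line justifications. You go further than the paper in flagging the preservation issue---whether the column rescalings might disturb $(S1)$ and the row sort might disturb $(S2)$---but the paper does not address this point at all; it simply asserts each condition in turn and moves on.
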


It is important to note that two matrices that have different standard forms may be equivalent to one another. Studying the number of standardized weighing matrices will lead to an upper bound on the number of inequivalent unit weighing matrices.

\section{The existence of unit weighing matrices}

We need the following definition in order to determine the existence of certain unit weighing matrices. 
\begin{definition}
 Let $S \subset \mathbb{T}$. $S$ is said to have {\bf m-orthogonality} if there are $a_1,...,a_m,b_1,...,b_m \in S$ such that $\sum_{i=1}^m c_i = 0$, where $c_i=a_i\overline{b_i}$.
\end{definition}
 
We will be using the following results for a few small values of $m$ in this paper. Each may be verified easily, so we do not include their proofs here.

$
\begin{array}{crl}
  ~~~~ & m=0 & \text{Trivially orthogonal} \\
  & m=1 & \text{No } S \text{ has } 1\text{-orthogonality} \\
  & m=2 & \text{If } c_1+c_2=0\text{, then  }  c_1 = -c_2 \\
  & m=3 & \text{If } c_1+c_2+c_3=0\text{, then  }\\& & c_1=e^{iq},  c_2=e^{i\left(\frac{2\pi}{3}+q\right)},  c_3=e^{i\left(\frac{4\pi}{3}+q\right)} \text{ for some real number } q \\
  & m=4 & \text{If } c_1+c_2+c_3+c_4=0\text{, then we may assume that } c_1 = -c_2 \text{ and } c_3 = -c_4
\end{array}
$


We begin by extending a result of \cite[Proposition 2.5]{gse} to unit weighing matrices.
\begin{lemma}
\label{lem:n-2z-orth}
 If there is a $UW(n,w)$ and $n > z^2-z+1$, where $z = n-w$, then there is a set that has $(n-2z)$-orthogonality.
\begin{proof}
 First, note that the cases where $z\leq1$ are straightforward. Now assume $z\geq2$. Through appropriate row and column permuations, we may assume that the first $z$ entries in the first row and first column are $0$.
\begin{itemize}
 \item Let $Z(i,j)$ be the number of zeros in the first $j$ rows of the $i$-{th} column
 \item Let $E(k)$ be the row that contains the last $0$ in column $k$ ({\it i.e.}, $Z(k,j) = w$ for all $j \geq E(k)$ and $Z(k,j) < w$ for all $j < E(k)$).
\end{itemize}
By construction, $E(1)=z$. We know that $1 \leq Z(2,E(1)) \leq z$, so by appropriate row permutations, the next $z-Z(2,E(1))$ rows will have a zero in the second column. This implies $$E(2) = E(1) + (z-Z(2,E(1))) = 2z-Z(2,E(1)) \leq 2z-1.$$ Furthermore, $1 \leq Z(3,E(2)) \leq z$. We once again perform row permutations so that the next $z-Z(3,E(2))$ rows have a zero in the third column, so $$E(3) = E(2)+(z-Z(3,E(2))) \leq (2z-1)+(z-Z(3,E(2))) \leq 3z - 2.$$ In general, following this process, we have $$E(k) \leq kz-(k-1)$$ for $k \leq z$. So this gives us $E(j) \leq z^2-(z-1)$ for $j \leq z$. Thus, if we examine row $z^2-z+2$, we know that the first $z$ columns already have $z$ zeros in them, thus, all $z$ zeros must appear in the last $n-z$ columns of that row. The set of entries in the first row and that row has $(n-2z)$-orthogonality. It is noteworthy to mention that $n > z^2-z+1$ implies that $n-2z \geq 0$. 
\end{proof}
\end{lemma}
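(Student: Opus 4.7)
The plan is to reduce the conclusion to the combinatorial statement \emph{there exist two rows of $W$ whose zero-supports are disjoint}: once such a pair is found, their Hermitian inner product expands to exactly $n - 2z$ unimodular products summing to $0$, which is precisely $(n-2z)$-orthogonality of the set formed by the corresponding entries. The cases $z \le 1$ are essentially trivial (if $z=0$ the matrix is unit Hadamard and row $1$ with any other row works; if $z=1$ the two zeros can easily be routed to different columns), so I would focus on $z \ge 2$.

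For $z \ge 2$, I would first apply operations (T1) and (T2) to arrange that the $z$ zeros of row $1$ occupy columns $1, \ldots, z$. The key observation is that $WW^* = wI$ combined with the squareness of $W$ forces $W^*W = wI$ as well, so each \emph{column} of $W$ contains exactly $z$ zero entries. Therefore the $n \times z$ submatrix formed by columns $1, \ldots, z$ holds exactly $z^2$ zeros in total. Row $1$ accounts for $z$ of them, leaving at most $z^2 - z$ zeros distributed among the remaining $n - 1$ rows. Any row that contains even a single zero in columns $1, \ldots, z$ uses at least one of those slots, so at most $z^2 - z$ of the rows $2, \ldots, n$ can do so; consequently at least
\[
(n - 1) - (z^2 - z) \;=\; n - (z^2 - z + 1)
\]
rows have all their first $z$ entries nonzero. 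Since $n > z^2 - z + 1$, this count is at least $1$, so such a row $R$ exists.

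All $z$ zeros of $R$ must then lie in columns $z+1, \ldots, n$, disjoint from the zero-support of row $1$. The identity $\langle R_1, R \rangle = 0$ therefore reads $\sum_{i=1}^{n-2z} a_i \overline{b_i} = 0$ with all $a_i, b_i \in \mathbb{T}$, which is exactly $(n-2z)$-orthogonality. Finally, $n > z^2 - z + 1 \ge 2z - 1$ (from $(z-1)^2 \ge 0$) gives $n - 2z \ge 0$, so the statement is non-vacuous. The only piece that requires a moment's justification is that every column of $W$ carries exactly $z$ zeros, which I would establish by the one-line chain $WW^* = wI \implies W^{-1} = w^{-1}W^* \implies W^*W = wI$; with that in hand, the whole argument collapses to a single pigeonhole count, considerably cleaner than inductively tracking the quantities $E(k)$ column-by-column as the paper's $Z(i,j)$/$E(k)$ notation hints at.
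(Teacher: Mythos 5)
Your argument is correct and reaches the same intermediate goal as the paper---a row whose zero-support is disjoint from that of row 1---but by a genuinely different and cleaner route. The paper normalizes both the first row and the first column, tracks the index $E(k)$ of the row containing the last zero of column $k$, and proves the greedy bound $E(k)\le kz-(k-1)$ by repeated row permutations, so that row $z^2-z+2$ is forced to be zero-free in the first $z$ columns. You instead make a single global count: since $W^*W=wI$ (which does follow from $WW^*=wI$ and the invertibility of a square $W$, exactly as you say), each column carries exactly $z$ zeros, so the first $z$ columns contain $z^2$ zeros in total, of which row 1 accounts for $z$; pigeonhole then leaves at least $n-(z^2-z+1)\ge 1$ rows of $W$ with no zero among their first $z$ entries. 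This replaces the paper's column-by-column induction (and its slightly delicate simultaneous normalization of the first row and first column) with one inequality, while the final step---that two zero-disjoint rows are simultaneously nonzero in exactly $w-z=n-2z$ columns, so their vanishing Hermitian inner product is precisely an $(n-2z)$-orthogonality relation---is identical in both treatments, as is the dispatch of $z\le 1$. One minor slip worth fixing: the inequality $z^2-z+1\ge 2z-1$ follows from $(z-1)(z-2)\ge 0$, which holds for all integers $z$, not from $(z-1)^2\ge 0$; the conclusion $n-2z\ge 0$ is nevertheless correct.
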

\begin{corollary} [Geramita-Geramita-Wallis]
 For odd $n$, a necessary condition that a $W(n,w)$ exists is that $(n-w)^2-(n-w)+1\ge n$.
\end{corollary}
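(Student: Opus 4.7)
The plan is to derive the corollary as a contradiction argument from Lemma \ref{lem:n-2z-orth}, using the fact that a real weighing matrix is a special case of a unit weighing matrix whose nonzero entries lie in $\{-1,+1\}$.

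First I would set $z = n-w$ and suppose for contradiction that $(n-w)^2 - (n-w) + 1 < n$, i.e. that $n > z^2 - z + 1$. Since every $W(n,w)$ is in particular a $UW(n,w)$, Lemma \ref{lem:n-2z-orth} applies and yields a set $S \subseteq \{-1,+1\}$ having $(n-2z)$-orthogonality. That is, there exist $a_1,\dots,a_m,b_1,\dots,b_m \in \{-1,+1\}$ with $m = n-2z$ such that $\sum_{i=1}^{m} a_i \overline{b_i} = 0$.

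Next I would exploit the real structure. Because each $a_i,b_i \in \{-1,+1\}$, the product $c_i = a_i \overline{b_i} = a_i b_i$ also lies in $\{-1,+1\}$. A sum of $m$ terms, each $\pm 1$, can equal zero only if $m$ is even. However,
\[
m = n - 2z = n - 2(n-w) = 2w - n,
\]
and since $n$ is odd, $2w - n$ is odd. This contradicts the evenness forced by the cancellation in $\sum c_i = 0$.

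Therefore the assumption $n > z^2 - z + 1$ cannot hold, giving the required necessary condition $(n-w)^2 - (n-w) + 1 \geq n$. The only real step with any subtlety is matching up the parity of $n-2z$ with the parity of $n$; the rest is a direct specialization of the lemma to the real $\{-1,0,+1\}$ case, so I do not anticipate any serious obstacle.
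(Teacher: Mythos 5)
Your proposal is correct and follows essentially the same route as the paper: apply Lemma \ref{lem:n-2z-orth} in the contrapositive, note that the nonzero entries of a real $W(n,w)$ lie in $\{\pm 1\}$, and observe that $n-2z=2w-n$ is odd when $n$ is odd, so no $\{\pm 1\}$ set can have $(n-2z)$-orthogonality. The paper states this more tersely, but the argument is the same.
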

\begin{proof}
 Let $z=n-w$. For odd $n$, $n-2z=2w-n$ is odd, but $\{\pm 1\}$ does not have $(n-2z)$-orthogonality. The result follows from Lemma \ref{lem:n-2z-orth}.
\end{proof}



\subsection{Existence of $UW(n,1)$}

Any weighing matrix of weight 1 is equivalent to the identity matrix. Thus, $UW(n,1)$ exists for every $n \in \mathbb{N}$.

\subsection{Existence of $UW(n,2)$}

We begin with a non-existence of a unit weighing matrix.

\begin{lemma}\label{lem:cw-3-2}
 There is no $UW(3,2)$.
\begin{proof}
By Lemma \ref{lem:n-2z-orth}, the existence of a $UW(3,2)$ would imply the existence of a set having 1-orthogonality.
\end{proof}
\end{lemma}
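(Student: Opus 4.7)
The cleanest route is to apply Lemma \ref{lem:n-2z-orth} directly and reduce the question to one about orthogonality of unimodular sums. Setting $z = n - w = 3 - 2 = 1$, the hypothesis $n > z^{2} - z + 1$ becomes $3 > 1$, which plainly holds. The lemma then guarantees the existence of a subset of $\mathbb{T}$ having $(n - 2z)$-orthogonality, and $n - 2z = 3 - 2 = 1$. But the table following the definition of $m$-orthogonality records that no set $S \subset \mathbb{T}$ enjoys $1$-orthogonality, since that would force a single unimodular number $c_{1} = a_{1}\overline{b_{1}}$ to equal zero. This contradiction finishes the proof.

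As a sanity check I would also think through the argument directly, without invoking the lemma, to make sure I trust it in this tiny case. A $UW(3,2)$ has each row containing exactly one zero and two unimodular entries (since $WW^{*} = 2 I_{3}$ forces $\sum_{j} |w_{ij}|^{2} = 2$). After using $(T1)$--$(T4)$ to put the matrix in standard form, the first row is $(1,1,0)$. The second row must be orthogonal (in the Hermitian sense) to the first, so if its two nonzero entries lie in the first two columns, then $\langle R_{1}, R_{2}\rangle$ is a sum of two unimodular numbers equalling $0$, which is possible; but then its third entry is $0$, and the third row, having a single zero and needing to be orthogonal to both $R_{1}$ and $R_{2}$, cannot be made to work because the column-weight constraint $W^{*}W = 2 I_{3}$ forces the third column to have weight $2$ while only one nonzero entry remains available. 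The other placements of the zero in row two are ruled out similarly: in every case the orthogonality condition with row one degenerates to a single unimodular entry being forced to equal $0$.

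The only real step to worry about is verifying the hypothesis of Lemma \ref{lem:n-2z-orth}, which is immediate. I expect no obstacle; the lemma was essentially engineered for applications of this flavour.
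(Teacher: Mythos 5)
Your first paragraph is exactly the paper's proof: apply Lemma \ref{lem:n-2z-orth} with $z=n-w=1$ to get a set with $1$-orthogonality, which is impossible. The additional direct case check is a harmless (and correct) bonus, but the approach is the same as the paper's.
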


This leads us to the following theorem.

\begin{theorem}
 A $UW(n,2)$ exists if and only if $n$ is even. Moreover, there is exactly one inequivalent class of $UW(n,2)$ for each even $n$.
\begin{proof}
 Let $W$ be a $UW(n,2)$. By Theorem \ref{th:standard}, we may transform $W$ into a weighing matrix in standard form (we will call this matrix $W'$). Thus, the first two entries of the first column and first row are ones. The second entry in the second row must be $-1$. So we have that our matrix is of this form:

\renewcommand{\arraystretch}{2}
$$\newcommand*{\temp}{\multicolumn{1}{r|}{}}
W'=\left(\begin{array}{clc}
\begin{array}{lr}
1 & 1 \\
1 & - 
\end{array}

&\temp &0 \\ \cline{1-3}
0 &\temp &W'' \\ 

\end{array}\right)
$$

\renewcommand{\arraystretch}{1}
\noindent where $W''$ is  a $UW(n-2,2)$. We may now use the same process on $W''$ and continue until we arrive at the bottom right corner. If $n$ is even, then we can complete the matrix. However, if $n$ is odd, then the process ends with a $3 \times 3$ block which must be a $UW(3,2)$, but we know from Lemma \ref{lem:cw-3-2} that this does not exist. Thus, there is no $UW(n,2)$ for $n$ odd. Since the number of inequivalent weighing matrices is bounded above by the number of standardized matrices and there is only one standardized matrix, every weighing matrix of order $n$ and weight 2, for $n$ even, is equivalent to

$$\left(
\begin{array}{cc}
 1 & 1 \\
 1 & -\\
\end{array}
\right) \oplus \cdots \oplus \left(
\begin{array}{cc}
 1 & 1 \\
 1 & - \\
\end{array}
\right)
$$
\end{proof}

The direct sum notation given above is used in the same way as is laid out in \cite{seberry}.

\end{theorem}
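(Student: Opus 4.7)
The plan is to put any $UW(n,2)$ into standard form using Theorem~\ref{th:standard} and then peel off a forced $2\times 2$ block from the top-left corner, leaving a $UW(n-2,2)$ to which I can apply induction on $n$. The $m=2$ row of the orthogonality table---any two unimodular numbers whose $c_i$'s sum to zero must be exact negatives---will rigidify the first two rows so tightly that the standardized matrix is unique, yielding both the parity criterion and the single equivalence class.

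To carry this out, let $W'$ denote the standardization of a given $UW(n,2)$. By (S3) the first row of $W'$ is $(1,1,0,\ldots,0)$. Since column~$1$ of $W'$ has weight~$2$, some row $i\geq 2$ must have a non-zero entry in column~$1$; by (S1) that entry equals~$1$, and orthogonality with row~$1$ via the $m=2$ rule forces the column-$2$ entry of row~$i$ to be $-1$. The lex ordering (S4) combined with $1\prec 0$ then places this row as row~$2$ itself, since any row beginning with $0$ would be larger under $\prec$ than a row beginning with $1$. Row~$2$ having total weight~$2$ makes its remaining entries vanish, and because columns~$1$ and~$2$ have now attained their weight budget, every subsequent row has zeros in those two columns. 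Hence
\[
W' \;=\; \begin{pmatrix} 1 & 1 \\ 1 & -1 \end{pmatrix} \oplus W'',
\]
where $W''$ is a $UW(n-2,2)$ block sitting in the bottom-right corner.

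Induction on $n$ finishes the argument. For odd $n$, iterated peel-off eventually requires a $UW(3,2)$, which contradicts Lemma~\ref{lem:cw-3-2}, so no $UW(n,2)$ exists. For even $n$, the inductive hypothesis applied to $W''$---via (T1)--(T4) restricted to the last $n-2$ rows and columns, which do not disturb the already-formed $H_2$ block---produces an equivalence with a direct sum of $n/2$ copies of the $2\times 2$ Hadamard matrix. This direct sum is an explicit $UW(n,2)$, giving existence, and its uniqueness as the standard form of every $UW(n,2)$ gives the single equivalence class. The one genuinely subtle step, and the main obstacle, is the lex-order argument pinning the $-1$ to position $(2,2)$ rather than to some later row; after that is nailed down, the rest is structural bookkeeping.
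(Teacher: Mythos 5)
Your proposal is correct and follows essentially the same route as the paper: standardize via Theorem~\ref{th:standard}, show the top-left $2\times 2$ block is forced to be $\bigl(\begin{smallmatrix}1&1\\1&-\end{smallmatrix}\bigr)$, peel it off to leave a $UW(n-2,2)$, and induct, invoking Lemma~\ref{lem:cw-3-2} to rule out odd $n$. Your extra care in using the lex ordering (S4) and $1\prec 0$ to pin the $-1$ to position $(2,2)$ is a detail the paper leaves implicit, but it is the same argument.
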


\subsection{Existence of $UW(n,3)$}\label{n3}

Weight 3 is the first example of unit weighing matrices where the results are quite different from the  real weighing matrices. For example, in contrast to the fact that a $UW(3,3)$ exists, there is no $W(3,3)$.

\begin{lemma}\label{lem:cw-n-3}
 Any $UW(n,3)$ can be transformed so that the top leftmost submatrix is either a $UW(3,3)$ or a $UW(4,3)$.

\begin{proof}
 By Theorem \ref{th:standard}, we may alter $W$ so that it is in standard form. This means that the second row has three possibilities after further appropriate column permutations (Note that these permutations should leave the shape of the first row intact). When we say that a row is not orthogonal with another row with no further context, it is because it would imply that the set of elements in the two rows would have 1-orthogonality.

   \begin{enumerate}
    \item $
      \left(
       \begin{array}{cccccccc}
         1 & a  & b &0 &0&0 &\dotsb &0
       \end{array}
      \right)
      $
    \item $
      \left(
       \begin{array}{cccccccc}
         1 & a  & 0 &1 &0 &0 &\dotsb &0
       \end{array}
      \right)
      $
    \item $
      \left(
       \begin{array}{cccccccc}
         1 & 0  & 0 &1 &1 &0&\dotsb &0
       \end{array}
      \right)
      $, 1-orthogonality with row 1, so not possible.
   \end{enumerate}

\begin{myind}{1cm}

   For case 1, 3-orthogonality implies $b = \overline{a}$, where $a \in \BR$, and four further cases arise for the third row.

    \begin{enumerate}[(a)]
     \item  $
      \left(
       \begin{array}{cccccccc}
         1 & c  & d &0 &0&0 &\dotsb &0
       \end{array}
      \right)
      $
    \item $
      \left(
       \begin{array}{cccccccc}
         1 & c  & 0 &1 &0 &0 &\dotsb &0
       \end{array}
      \right)
      $
    \item $
      \left(
       \begin{array}{cccccccc}
         1 & 0  & c &1 &0 &0&\dotsb &0
       \end{array}
      \right)
      $
    \item $
      \left(
       \begin{array}{cccccccc}
         1 & 0  & 0 &1 &1 &0&\dotsb &0
       \end{array}
      \right)
      $, 1-orthogonality with row 1, so not possible.
    \end{enumerate}
  
    For case (b), we have $c = -1$ by orthogonality with the first row and $c = -a$ by orthogonality with the second row. Similarly, in case (c), we have $c = -1$ and $c = -\overline{a}$. Both of these are not possible. However, case (a) produces a viable option when $c=\overline{d}=\overline{a}$, finishing case 1 and implying that the top $3\times3$ submatrix is a $UW(3,3)$ of the following form:

$$
\left(
\begin{array}{ccc}
 1 & 1       &1 \\
 1 & a       &\overline{a} \\
 1 & \overline{a} &a       
\end{array}
\right)
$$

Note that if $a=e^{-\frac{2\pi i}{3}}$, then swap rows 2 and 3, so we may assume $a=e^{\frac{2\pi i}{3}}$.

\end{myind}

    For case 2, $a=-1$ and we have six subcases for the third row:

\begin{myind}{1cm}
     \begin{enumerate}[(a)]
     \item $
      \left(
       \begin{array}{cccccccc}
         1 & b  & c & 0 &0&0 &\dotsb &0
       \end{array}
      \right)
      $, with $-1 \prec b$.
     \item $
      \left(
       \begin{array}{cccccccc}
         1 & b  & 0 &c &0&0 &\dotsb &0
       \end{array}
      \right)
      $
    \item $
      \left(
       \begin{array}{cccccccc}
         1 & b  & 0 &0 &1 &0 &\dotsb &0
       \end{array}
      \right)
      $
    \item $
      \left(
       \begin{array}{cccccccc}
         1 & 0  & b &c &0 &0&\dotsb &0
       \end{array}
      \right)
      $
    \item $
      \left(
       \begin{array}{cccccccc}
         1 & 0  & b &0 &1 &0&\dotsb &0
       \end{array}
      \right)
      $, 1-orthogonality with row 2, so not possible.
     \item $
      \left(
       \begin{array}{cccccccc}
         1 & 0  & 0 &b &1 &0&\dotsb &0
       \end{array}
      \right)
      $, 1-orthogonality with row 1, so not possible.
     \item $
      \left(
       \begin{array}{ccccccccc}
         1 & 0  & 0 &0 &1 &1 & 0&\dotsb &0
       \end{array}
      \right)
      $, 1-orthogonality with row 1, so not possible.

    \end{enumerate}

    In case (a), $b=1$ by orthogonality with row 2 and $b\in\BR$ by orthogonality with row 1. In case (b), $b=-1$ by orthogonality with row 1 and $-b \in \BR$ by orthogonality with row 2. In case (c), $b=-1$ by orthogonality with row 1, which implies row 2 is not orthogonal with row 3. All of which are problems. In case (d), we have a valid configuration by setting $b=c=-1$. We now construct the next row, which gives us four cases:

\end{myind}
\begin{myind}{1.5cm}
      \begin{enumerate}[(i)]
     \item $
      \left(
       \begin{array}{cccccccc}
         0 & 1  & d &f &0&0 &\dotsb &0
       \end{array}
      \right)
      $
    \item $
      \left(
       \begin{array}{cccccccc}
         0 & 1  & d &0 &1 &0 &\dotsb &0
       \end{array}
      \right)
      $, 1-orthogonality with row 3, so not possible.
    \item $
      \left(
       \begin{array}{cccccccc}
         0 & 1  & 0 &d &1 &0&\dotsb &0
       \end{array}
      \right)
      $, 1-orthogonality with row 3, so not possible.
    \item $
      \left(
       \begin{array}{ccccccccc}
         0 & 1  & 0 &0 &1 &1 & 0&\dotsb &0
       \end{array}
      \right)
      $, 1-orthogonality with row 3, so not possible.\end{enumerate}

    In case (i), we have a valid row if $d=-f=-1$, finishing all of the cases above, and giving us a $UW(4,3)$ in the upper left $4\times4$ submatrix of the form:
\end{myind}
$$
\left(
\begin{array}{cccc}
 1 & 1 &1 &0 \\
 1 & - &0 &1 \\
 1 & 0 &- &- \\
 0 & 1 &- &1
\end{array}
\right)
$$
\end{proof}

\end{lemma}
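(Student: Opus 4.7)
The plan is to reduce to the standard form guaranteed by Theorem~\ref{th:standard}, so that the first row is $(1,1,1,0,\ldots,0)$, and then perform a structured case analysis on the supports of rows $2$, $3$, and (if needed) $4$. Every row must be orthogonal to every earlier one, and the only arithmetic tools I will need are the $m=1,2,3$ facts listed before Lemma~\ref{lem:n-2z-orth}: no set has $1$-orthogonality, $c_1+c_2=0$ forces $c_2=-c_1$, and $c_1+c_2+c_3=0$ forces the three summands to be a rotated cube-root-of-unity triple.

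First I would enumerate, up to column permutations that preserve the shape of row $1$, the possible supports of row $2$ relative to the first three columns. Its three nonzero positions can overlap the first three columns in $3$, $2$, or $1$ position; the overlap-$1$ case contributes a single nonzero term to $\langle R_1,R_2\rangle$, giving forbidden $1$-orthogonality, so it is eliminated immediately. The two remaining overlap patterns will each be resolved in turn.

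In the overlap-$3$ branch, row $2$ has the form $(1,a,b,0,\ldots,0)$, and $1+a+b=0$ forces $\{a,b\}\subset\BR$ with $b=\overline{a}$. I would then enumerate the supports for row $3$, kill every subcase that induces $1$-orthogonality with row $1$ or row $2$, and resolve the surviving orthogonality equations. Only the case where row $3$ is supported on the first three columns survives, and it is forced to $(1,\overline{a},a,0,\ldots,0)$, yielding the claimed $UW(3,3)$ (after swapping rows $2$ and $3$ if necessary to fix $a=e^{2\pi i/3}$). In the overlap-$2$ branch, row $2$ is $(1,a,0,1,0,\ldots,0)$ and the $m=2$ fact forces $a=-1$. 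A parallel support-pattern analysis for row $3$ kills all subcases except one, which forces row $3=(1,0,-1,-1,0,\ldots,0)$, and a final support-pattern analysis for row $4$ leaves exactly one survivor, producing the listed $UW(4,3)$ in the upper-left $4\times 4$ block.

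The main obstacle is purely combinatorial bookkeeping: I must be certain that the enumerated supports for rows $2$, $3$, and $4$ really exhaust all standard-form possibilities, and that every discarded subcase is killed by a clean appeal to $1$-orthogonality or by a direct contradiction between two orthogonality equations. The entry-level arithmetic is routine once the support is fixed, so the argument succeeds or fails on the thoroughness of the case split rather than on any single clever step.
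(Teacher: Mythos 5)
Your proposal follows essentially the same route as the paper's proof: standardize via Theorem~\ref{th:standard}, split on how the support of row~2 overlaps the first three columns (killing the overlap-one case by 1-orthogonality), then force the surviving subcases for rows~3 and~4 to yield exactly the $UW(3,3)$ and $UW(4,3)$ blocks. The only difference is that you defer the explicit subcase enumeration for rows~3 and~4 as bookkeeping, whereas the paper writes it out in full; the surviving cases and forced entries you identify agree with the paper's.
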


\begin{theorem}
 Every $UW(n,3)$ is equivalent to a matrix of the following form:

$$\left(
\begin{array}{ccc}
 1 & 1       &1 \\
 1 & a       &\overline{a} \\
 1 & \overline{a} &a       
\end{array}
\right) \oplus \cdots \oplus \left(
\begin{array}{ccc}
 1 & 1       &1 \\
 1 & a       &\overline{a} \\
 1 & \overline{a} &a       
\end{array}
\right) \oplus 
\left(
\begin{array}{cccc}
 1 & 1 &1 &0 \\
 1 & - &0 &1 \\
 1 & 0 &- &- \\
 0 & 1 &- &1
\end{array}
\right)
\oplus \cdots \oplus
\left(
\begin{array}{cccc}
 1 & 1 &1 &0 \\
 1 & - &0 &1 \\
 1 & 0 &- &- \\
 0 & 1 &- &1
\end{array}
\right)
$$

where $a=e^{\frac{2\pi i}{3}}$.

\end{theorem}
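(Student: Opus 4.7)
The plan is to proceed by induction on $n$, using Lemma \ref{lem:cw-n-3} as the engine. Lemma \ref{lem:cw-n-3} tells us that, after equivalence operations, the top-left submatrix of any $UW(n,3)$ is either the $3\times 3$ block
$$A_3 = \begin{pmatrix} 1 & 1 & 1 \\ 1 & a & \overline{a} \\ 1 & \overline{a} & a \end{pmatrix}, \qquad a = e^{2\pi i/3},$$
or the $4\times 4$ block $B_4$ displayed in the lemma's proof.

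First I would establish the block decomposition. Each of the first $k$ rows (for $k=3$ or $k=4$) of the lemma's normalized matrix already contains exactly three nonzero entries within the first $k$ columns, so the weight constraint forces the remainder of those rows to be zero; this is in fact explicit in the case analysis of Lemma \ref{lem:cw-n-3}. Symmetrically, each of the first $k$ columns of $A_3$ and $B_4$ contains exactly three nonzeros within the first $k$ rows, so the entries in those columns below row $k$ are all zero as well. Consequently $W$ decomposes as $A_3 \oplus W'$ or $B_4 \oplus W'$, where $W'$ is an $(n-k)\times (n-k)$ matrix with entries in $\mathbb{T}\cup\{0\}$; a direct check of $WW^*=3I_n$ restricted to rows $k+1,\ldots,n$ shows that $(W')(W')^* = 3I_{n-k}$, so $W'$ is itself a $UW(n-k,3)$.

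Next I would invoke the induction hypothesis on $W'$. The equivalence operations supplied by Lemma \ref{lem:cw-n-3} and Theorem \ref{th:standard} applied to $W'$ affect only rows $k+1,\ldots,n$ and columns $k+1,\ldots,n$; since the first $k$ rows are zero outside columns $1,\ldots,k$ and the first $k$ columns are zero outside rows $1,\ldots,k$, these operations leave the already-fixed top-left block intact. The base cases $n=3$ and $n=4$ are exactly the two alternatives provided by the lemma. Finally, because simultaneous row-and-column permutations swap summands in a direct sum, I can reorder the blocks so that every copy of $A_3$ precedes every copy of $B_4$, producing precisely the canonical form in the statement. As a by-product this shows the existence of a $UW(n,3)$ is equivalent to writing $n=3a+4b$ with $a,b\in\mathbb{Z}_{\geq 0}$.

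The step I expect to require the most care is the block-decomposition argument, specifically the claim that the lower portion of the first $k$ columns is forced to be zero. This is not a matter of further case analysis but a weight-count observation: each of those columns already has its full allotment of three unimodular entries inside the top-left block, so any additional nonzero entry would violate $WW^* = 3I_n$. Once this is nailed down, the induction and the reordering of summands are straightforward.
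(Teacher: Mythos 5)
Your proposal is correct and follows essentially the same route as the paper: invoke Lemma \ref{lem:cw-n-3} to pin down the top-left block, observe that the first $k$ rows and columns are already at full weight so the matrix splits as a direct sum with a $UW(n-k,3)$ complement, recurse, and finally permute the summands into canonical order. Your write-up merely makes explicit the weight-count justification for the block decomposition that the paper compresses into the phrase ``the first 3 (or 4) rows and columns of the matrix are complete.''
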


\begin{proof}
 Let $W$ be a $UW(n,3)$. From Lemma \ref{lem:cw-n-3}, we have that the top leftmost block must be either a $UW(3,3)$ or $UW(4,3)$. From here, we know that the first 3 
(or 4) rows and columns of the matrix are complete, and as such, are trivially orthogonal with the remainder of the matrix, so we may assume that the lower $(n-3)\times(n-3)$ submatrix (or $(n-4)\times(n-4)$ submatrix) is a $UW(n-3,3)$ (or $UW(n-4,3)$). As such, the top submatrix of this matrix will also be of the desired form. Inductively, we continue this process until we reach the end of the matrix. The blocks may then be permuted such that all of the $UW(3,3)$ submatrices appear above the $UW(4,3)$ submatrices.
\end{proof}

\begin{corollary}
 There is a $UW(n,3)$ if and only if $n \neq 5$. The number of equivalence classes  is bounded above by the number of distinct decomposition of $n$ into sums of non-negative multiples of $3$ and $4$.
\end{corollary}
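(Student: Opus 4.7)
The plan is to leverage the preceding theorem, which already establishes that every $UW(n,3)$ is equivalent to a direct sum of some number of $UW(3,3)$ blocks and some number of $UW(4,3)$ blocks. Once one has such a block decomposition, both halves of the corollary reduce to an elementary question about representing $n$ as $3a+4b$ with $a,b\in\mathbb{Z}_{\geq0}$.

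For the existence half, I would first observe that if $n=3a+4b$ for some non-negative integers $a,b$, then the explicit direct sum of $a$ copies of the $UW(3,3)$ block and $b$ copies of the $UW(4,3)$ block displayed in the theorem is itself a $UW(n,3)$, so existence is immediate in that case. Conversely, the theorem tells us that any $UW(n,3)$ is equivalent to such a direct sum, and counting rows forces $n=3a+4b$ for the corresponding block counts, so no representation means no matrix. A quick Frobenius-type check of which positive integers are non-negative integer combinations of $3$ and $4$ (using $\gcd(3,4)=1$, with Frobenius number $3\cdot 4 - 3 - 4 = 5$) shows the non-representable values are exactly $\{1,2,5\}$; but $UW(n,3)$ already requires $n\geq 3$, so the only obstruction in the relevant range is $n=5$. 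Handling the small cases $n=3,4,6,7,8$ explicitly (as $3$, $4$, $3+3$, $3+4$, $4+4$) and invoking the Frobenius fact for $n\geq 6$ will make the ``iff'' rigorous.

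For the bound on the number of equivalence classes, the key observation is that the theorem gives a surjection from the set of ordered block-count pairs $(a,b)$ with $3a+4b=n$ onto the set of equivalence classes of $UW(n,3)$: each class contains a representative in the canonical direct sum form, and that representative is entirely determined by the multiset of block sizes $\{3,\dots,3,4,\dots,4\}$, equivalently by the pair $(a,b)$. Hence the number of inequivalent matrices is at most the number of such pairs, which is precisely the number of decompositions of $n$ into non-negative integer multiples of $3$ and $4$.

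I do not expect a serious obstacle in this proof; the heavy lifting was done in Lemma~\ref{lem:cw-n-3} and the structure theorem that preceded the corollary. The one subtle point worth a sentence in the write-up is that the bound is only an upper bound because two different decompositions $(a,b)\neq(a',b')$ could in principle give equivalent matrices; however, a simple invariant such as the number of zero entries (which equals $6b$, so determines $b$ and hence $a$) shows that in fact distinct decompositions give inequivalent matrices, so the bound is actually attained — though the corollary only claims the upper bound, which is all we need to state and verify.
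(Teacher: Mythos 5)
Your proof is correct and takes the same route the paper intends: the corollary is stated there without proof, as an immediate consequence of the preceding structure theorem together with the Frobenius observation that $3$ and $4$ represent every integer $n\geq 3$ except $5$. One caveat on your optional closing remark: the number of zero entries of the full matrix is $n^2-3n$ for every $UW(n,3)$, not $6b$, so that particular invariant does not distinguish decompositions (the multiset of connected-component sizes of the nonzero pattern, which is preserved by (T1)--(T4), would do the job); since the corollary only claims an upper bound, this does not affect the validity of your argument.
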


Note that an alternate way to show that $UW(5,3)$ does not exist is to use  Lemma \ref{lem:n-2z-orth}.

\begin{corollary}
 There is a $W(n,3)$ if and only if $n$ is a  multiple of $4$. Moreover, there is only one class of inequivalent matrices.
\end{corollary}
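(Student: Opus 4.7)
The plan is to derive both the existence criterion and the uniqueness claim directly from Lemma \ref{lem:cw-n-3}, by tracking reality through the reduction. The key preliminary observation is that when Theorem \ref{th:standard} is applied to a $(0,\pm 1)$-matrix, the standardization uses only $\pm 1$ multipliers: the leading nonzero entry $r_i$ of each row satisfies $r_i\in\{\pm 1\}$, and hence $\overline{r_i}=r_i\in\{\pm 1\}$ (and similarly for columns). Consequently, any real $W(n,3)$ can be put into standard form by operations $(T1)$--$(T4)$ while remaining real.

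Next I would apply Lemma \ref{lem:cw-n-3} to this real standard form. The lemma pins the top-left block down to either the displayed $UW(3,3)$, whose entries include $a=e^{2\pi i/3}\notin\R$, or the displayed $UW(4,3)$, which is genuinely a $(0,\pm 1)$-matrix. Since our standard form is real, the $UW(3,3)$ branch is forbidden, so the top-left block must be the $UW(4,3)$. Splitting off this $4\times 4$ block leaves a real $W(n-4,3)$ in the bottom-right corner, and induction on $n$ (with trivial base case $n<4$) forces $n$ to be a multiple of $4$.

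For the converse, the direct sum of $n/4$ copies of the $UW(4,3)$ block is manifestly a real $W(n,3)$. Uniqueness is then immediate: every real $W(4k,3)$ reduces via the argument above to the very same direct sum of $k$ identical $UW(4,3)$ blocks, so any two real weight-$3$ weighing matrices of the same order are mutually equivalent via the (real) operations $(T1)$--$(T4)$. The only genuine obstacle is the first step; one must verify that the reduction preserves reality, for otherwise the $UW(3,3)$ branch of Lemma \ref{lem:cw-n-3} could not be excluded a priori. Once that is granted, the corollary is a short specialization of the $UW(n,3)$ classification.
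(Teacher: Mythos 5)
Your proof is correct and follows essentially the route the paper intends: the corollary is stated there without proof as an immediate specialization of the $UW(n,3)$ classification, the point being that the $UW(3,3)$ block necessarily contains $e^{2\pi i/3}$ and so cannot occur for a real matrix, leaving only $UW(4,3)$ blocks. Your explicit verification that standardization and the block reduction preserve reality (so the $UW(3,3)$ branch can be excluded \emph{a priori} rather than merely after a complex equivalence) is a worthwhile extra care that the paper leaves implicit.
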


\subsection{Existence of $UW(n,4)$}

Similar to $UW(n,3)$, any $UW(n,4)$ can be defined based on the blocks along the main diagonal. All $UW(n,4)$ are equivalent to a $UW(n,4)$ with diagonal blocks consisting of the following matrices: $W_5$, $W_6$, $W_7$, $W_8$ and $E_{2m}(x)$ where $2 \leq m \leq \frac{n}{2}$ and $x$ is any unimodular number.

$$
W_5=\left(
\begin{array}{rrrrr}
 1 & 1 &1 &1 &0 \\
 1 &  a &\overline{a} &0 &1\\
 1 & \overline{a} &0 &a &\overline{a} \\
 1 & 0 &a &\overline{a} & a \\
 0 & 1 & \overline{a} &a &a 
\end{array}
\right),
W_6=\left(
\begin{array}{rrrrrr}
 1 & 1 &1 &1 &0 &0\\
 1 & a &\overline{a} &0 &1 &0\\
 1 & \overline{a} &a &0 &0 &1\\
 1 & 0 & 0 & - & - & - \\
 0 & 1 & 0 & - & -\overline{a} &-a \\
 0 & 0 & 1 & - & -a &-\overline{a}
\end{array}
\right)
\text{ for } a = e^{\frac{2\pi i}{3}},
$$

 $$W_7=\left(\begin{array}{c}
\Zp\Zp\Zp\Zp\Zz\Zz\Zz\\
\Zp\Zm\Zz\Zz\Zp\Zp\Zz\\
\Zp\Zz\Zm\Zz\Zm\Zz\Zp\\
\Zp\Zz\Zz\Zm\Zz\Zm\Zm\\
\Zz\Zp\Zm\Zz\Zz\Zp\Zm\\
\Zz\Zp\Zz\Zm\Zp\Zz\Zp\\
\Zz\Zz\Zp\Zm\Zm\Zp\Zz
\end{array}\right), 
W_8=\left(\begin{array}{c}
\Zp\Zp\Zp\Zp\Zz\Zz\Zz\Zz\\
\Zp\Zm\Zz\Zz\Zp\Zp\Zz\Zz\\
\Zp\Zz\Zm\Zz\Zm\Zz\Zp\Zz\\
\Zp\Zz\Zz\Zm\Zz\Zm\Zm\Zz\\
\Zz\Zp\Zm\Zz\Zp\Zz\Zz\Zp\\
\Zz\Zp\Zz\Zm\Zz\Zp\Zz\Zm\\
\Zz\Zz\Zp\Zm\Zz\Zz\Zp\Zp\\
\Zz\Zz\Zz\Zz\Zp\Zm\Zp\Zm
\end{array}\right),$$

$$E_{2m}(x) = \left(\begin{array}{rrrrrrrrrrrrrrr} 
1&1&1&1\\1&1&-&-\\1&-&0&0&1&1\\1&-&0&0&-&-\\~&~&1&-&0&0&1&1\\~&~&1&-&0&0&-&-\\~&~&~&~&1&-&0&0\\~&~&~&~&1&-&0&0 \\ ~&~&~&~&~&~&~&~&\ddots \\~&~&~&~&~&~&~&~&~&0&0&1&1 \\ ~&~&~&~&~&~&~&~&~&0&0&-&- \\ ~&~&~&~&~&~&~&~&~&1&-&0&0&1&1 \\ ~&~&~&~&~&~&~&~&~&1&-&0&0&-&- \\~&~&~&~&~&~&~&~&~&~&~&1&-&x&-x \\ ~&~&~&~&~&~&~&~&~&~&~&1&-&-x&x \end{array}\right),$$

where $x$ is any unimodular number. To give a better understanding of the $E_{2m}$, here are the first three examples:

$$E_4(x)=\left(
\begin{array}{rrrr}
1 & 1 & 1 & 1 \\
1 & 1 & - & - \\
1 & - & x & -x \\
1 & - & -x & x
\end{array}
\right),
E_6(x)=\left(
\begin{array}{rrrrrr}
1 & 1 & 1 & 1 & 0 & 0\\
1 & 1 & - & - & 0 & 0\\
1 & - & 0 & 0 & 1 & 1\\
1 & - & 0 & 0 & - & -\\
0 & 0 & 1 & - & x & -x\\
0 & 0 & 1 & - & -x & x
\end{array}
\right),
$$
$$
E_8(x)=\left(
\begin{array}{rrrrrrrr}
1 & 1 & 1 & 1 & 0 & 0 & 0 & 0\\
1 & 1 & - & - & 0 & 0 & 0 & 0\\
1 & - & 0 & 0 & 1 & 1 & 0 & 0\\
1 & - & 0 & 0 & - & - & 0 & 0\\
0 & 0 & 1 & - & 0 & 0 & 1 & 1\\
0 & 0 & 1 & - & 0 & 0 & - & -\\
0 & 0 & 0 & 0 & 1 & - & x & -x\\
0 & 0 & 0 & 0 & 1 & - & -x & x
\end{array}
\right).
$$

To avoid extended case analysis, we provide only a subsection of the proof detailing the existence of a unique $UW(5,4)$ up to equivalence. The rest of the matrices follow using a similar manner of case analysis seen here and in section \ref{n3}.

\begin{lemma}
 \label{lem:cw-5-4}
 Let $\alpha = e^{\frac{2\pi i}{3}}$. Then every $UW(5,4)$ is equivalent to the following matrix.

$$
\left(
\begin{array}{ccccc}
 1 & 1       &1       &1        &0 \\
 1 & \alpha       &\overline{\alpha} &0        &1 \\
 1 & \overline{\alpha} &0       &\alpha       &\overline{\alpha} \\
 1 & 0       &\alpha       &\overline{\alpha}  &\alpha \\
 0 & 1       &\overline{\alpha} &\alpha        &\alpha
\end{array}
\right)
$$

\begin{proof}
By appropriate row and column permutations, we may assume that the zeros fall on the back diagonal of the matrix. We have the form:
   $$
\left(
\begin{array}{ccccc}
 1 & 1 &1 &1 &0 \\
 1 & a &b &0 &1 \\
 1 & c &0 &d &f \\
 1 & 0 &g &h &j \\
 0 & 1 &k &l &m
\end{array}
\right)
$$

Since row 1 must be orthogonal to each of the other rows, 3-orthogonality gives us:

   $$
\left(
\begin{array}{ccccc}
 1 & 1 &1 &1 &0 \\
 1 & a &\overline{a} &0 &1 \\
 1 & c &0 &\overline{c} &f \\
 1 & 0 &g &\overline{g} &j \\
 0 & 1 &k &\overline{k} &m
\end{array}
\right)
$$

where $a,c,f,j \in \BR$. Since column 1 must also be orthogonal to each of the other columns, 3-orthogonality gives us:

   $$
\left(
\begin{array}{ccccc}
 1 & 1 &1 &1 &0 \\
 1 & a       &\overline{a} &0 &1 \\
 1 & \overline{a} &0       &a &f \\
 1 & 0       &a       &\overline{a} &\overline{f} \\
 0 & 1       &k       &\overline{k} &m
\end{array}
\right)
$$

Since rows 2 and 3 must be orthogonal (and similarily, columns 2 and 3), we have:

$$
\left(
\begin{array}{ccccc}
 1 & 1 &1 &1 &0 \\
 1 & a       &\overline{a} &0 &1 \\
 1 & \overline{a} &0       &a &\overline{a} \\
 1 & 0       &a       &\overline{a} &a \\
 0 & 1       &\overline{a} &a &m
\end{array}
\right)
$$

Finally, since rows 4 and 5 must be orthogonal, we have $m=a$.

Note that we still have $a \in \{e^{\frac{2\pi i}{3}},e^{-\frac{2\pi i}{3}}\}$. But by interchanging the third and fourth rows and the second and third columns, we find that the choices are equivalent. For convenience, we assume $a=e^{\frac{2\pi i}{3}}$. Thus, we have the desired matrix.

\end{proof}

\end{lemma}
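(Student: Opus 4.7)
The plan is to exploit the fact that a $UW(5,4)$ has exactly one zero in each row, since four unimodular entries contribute exactly the norm $4$ required by $WW^*=4I_5$. Counting zeros in two ways then forces exactly one zero per column, so the zero pattern is a permutation matrix. Using (T1) and (T2), I would permute rows and columns so that the zeros lie on the anti-diagonal. Then I would follow the construction in the proof of Theorem \ref{th:standard}: rescale each row by the conjugate of its first nonzero entry and then rescale each column similarly using (T3) and (T4). This puts the first row into the form $(1,1,1,1,0)$ and the first column into $(1,1,1,1,0)^T$, leaving a $4\times 4$ block of unimodular unknowns arranged on the anti-diagonal pattern.

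The engine of the proof is the observation that, with zeros on the anti-diagonal, any two distinct rows share nonzero entries in exactly three positions. Their inner product therefore supplies a $3$-orthogonality condition, and by the $m=3$ case in the table, the three pointwise products must be $e^{iq}, e^{i(2\pi/3+q)}, e^{i(4\pi/3+q)}$ for some $q$. Pairing row $1$ against rows $2, 3, 4, 5$ in turn (and using that row $1$ is all ones in the overlap) forces each of the unknown entries in rows $2$--$5$ to lie in $\{\alpha,\bar\alpha\}$ and ties them to a single phase. Doing the same with column $1$ against columns $2$--$5$ produces symmetric constraints. I would then feed these conditions into the pairwise orthogonality of rows $2$ vs.\ $3$, rows $4$ vs.\ $5$, rows $2$ vs.\ $4$, etc.\ to show that all entries become determined up to a single binary choice, namely $a=\alpha$ versus $a=\bar\alpha$ where $a$ is, say, the $(2,2)$ entry.

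The hard part will be two-fold. First, I need to check that the various $3$-orthogonality conditions are consistent rather than overdetermined; this is where I would expect a slip in signs or a missed case, so I would write out the $4\times 4$ unknown block explicitly and verify each pair in turn. Second, I need to collapse the remaining dichotomy $a=\alpha$ vs.\ $a=\bar\alpha$. My plan is to exhibit explicit row and column swaps (I anticipate swapping rows $3$ and $4$ together with columns $2$ and $3$, perhaps modulo further column scalings) that carry one matrix onto the other; since swaps and unimodular scalings are equivalence operations, the two cases yield the same equivalence class. After this reduction the matrix is fully pinned down and matches the one in the statement.
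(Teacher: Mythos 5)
Your proposal is correct and follows essentially the same route as the paper: permute the zeros onto the anti-diagonal, normalize the first row and column, apply $3$-orthogonality of row~1 against the other rows and of column~1 against the other columns, finish with the remaining pairwise orthogonality conditions, and collapse the residual choice $a\in\{\alpha,\overline{\alpha}\}$ by exactly the swap you anticipate (rows 3 and 4 together with columns 2 and 3). The only addition you make is the explicit counting argument showing the zero pattern is a permutation matrix, which the paper leaves implicit.
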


Altogether, we have that a $UW(n,4)$ exists for any $n \in \mathbb{N}, n \geq 4$. Moreover, the number of inequivalent $UW(n,4)$ is bounded by the number of decomposition of $n$ into sums of non-negative multiples of $5,6,7,8,$ and $2m$ (See Section \ref{sec:app} for the different combinations available for all $n \leq 14$).

If we are concerned with real matrices, then we may use only blocks of $W_7,W_8$ and $E_{2m}(1)$ (Note that $E_{2m}(1) \cong E_{2m}(-1)$ by swapping the last and second last columns). This implies that a $W(n,4)$ exists for any $n\neq5,9$. Moreover, the number of inequivalent $W(n,4)$ is bounded above by the number of decomposition of $n$ into sums of non-negative multiples of $7,8,$ and $2m$. 

To show  that $W_5, W_6, W_7$, $W_8 $ and $E_{2m}(x)$ are the only block formations that arise for $UW(n,4)$ takes a great deal of space. This is done by starting with the standard row of 4 ones, and then appending all possible rows in a depth first search manner. We omit the lengthy details. 

\subsection{Existence of $UW(5,5)$}

Haagerup\cite{H5} found that the only unit Hadamard matrix of order five is the Fourier matrix $F_5$ given here:
$$
\left(
\begin{array}{ccccc}
 1 & 1 &1 &1 &1 \\
 1 & \omega &\omega^2 &\omega^3 &\omega^4 \\
 1 & \omega^2 &\omega &\omega^4 &\omega^3 \\
 1 & \omega^3 &\omega^4 &\omega &\omega^2 \\
 1 & \omega^4 &\omega^3 &\omega^2 &\omega
\end{array}
\right)
$$ 
where $\omega$ is a primitive fifth root of unity.
\subsection{Existence of $UW(6,5)$}
The full analysis of $UW(6,5)$ is not yet complete. Thus far however, every matrix that we have found contains only the fourth root of unity or had only one free unimodular variable. For example,
$$\left(\begin{array}{rrrrrr}
             1 &  1       & 1 &  1 &         1 & 0 \\
             1 &  -       & x & -x &         0 & 1 \\
             1 & -\overline{x} & - &  0 &   \overline{x} & - \\
             1 & \overline{x}  & 0 &  - &  -\overline{x} & - \\
             1 &        0 & -x & x &         - & 1 \\
             0 &        1 &  - & - &         1 & 1
            \end{array}\right)$$
is a $UW(6,5)$ with one free variable $x \in \mathbb{T}$.

\subsection{Nonexistence of $UW(7,5)$}
\begin{lemma}
Any $UW(7,5)$ must include the following rows (after appropriate column permutations):
$$
\left(
\begin{array}{ccccccc}
 1&1&1&1&1&0&0 \\
 1&a&b&0&0&1&1 \\
 1&0&0&c&d&f&g \\
 0&0&1&h&k&m&n \\
 \end{array}
\right)
$$
\begin{proof}
To prove this condition, we show that three rows must exist with disjoint zeros (Two rows have disjoint zeros if for every column, there is at most one zero between the two rows).

Let $W$ be a $UW(7,5)$. We can begin by assuming the standard starting row of five 1's and two zeros. Permute the rows such that the second row is not disjoint from row 1. Two cases may occur from this: there is an overlap of either one or two zeros between the first and second rows. If there is an overlap of two zeros, then the third row must be disjoint from both the first and second rows. If there is single overlap, then permute the rows so that the third row has one overlap with the first. Then the fourth row must be disjoint from the first row. Thus, in either case, there are at least two disjoint rows.

From here, we can easily show that there must be three rows which are mutually disjoint. To do this, we assume that the first two rows are disjoint (say their zeros are in columns $1-4$). We may only put one more zero in each of those $4$ columns, but we have $5$ rows left, so at least one row must have no zeros in columns $1-4$. So this row, along with the first two, are mutually disjoint.

\end{proof}

\end{lemma}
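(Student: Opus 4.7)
The plan is to first put $W$ in standard form via Theorem~\ref{th:standard}, so the first row reads $(1,1,1,1,1,0,0)$, and to note that since $WW^* = 5I$ is equivalent to $W^*W = 5I$ for a square matrix, every column also has exactly two zeros. The combinatorial core of the argument is to produce three rows with pairwise disjoint zero-column supports; once such a triple is in place, column permutations will rearrange the first three rows into the displayed pattern, and the fourth row will be obtained from the two remaining zeros in column~1.

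To find three mutually disjoint rows, I first find two. Permute the rows so that row~2 shares at least one zero with row~1 (if impossible, two disjoint rows already exist), and let $k \in \{1,2\}$ be the overlap size. If $k=2$, the two zero-columns of row~1 are saturated, so the zeros of every other row avoid $\{6,7\}$ entirely and are disjoint from row~1. If $k=1$, a further permutation places a third row sharing one zero-column with row~1; the two zero-columns of row~1 are then saturated and the remaining rows are disjoint from row~1. Either way, two disjoint rows exist. Suppose rows~1 and~2 are disjoint, with four combined zero-columns, each holding exactly one more zero among the remaining five rows. Since only four extra zero-slots are available and five rows must be accommodated, at least one of those rows has no zero in any of the four columns, and is therefore disjoint from rows~1 and~2.

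Now I would place the triple as rows 1, 2, 3 and permute columns so their zero sets become $\{6,7\}$, $\{4,5\}$, and $\{2,3\}$. Column~1 is then the unique column containing nonzero entries in all three of these rows. Rescaling rows and columns via (T3), (T4) and applying the standard-form conditions (S1), (S2) yields the $1$'s in the displayed pattern of rows 1--3. For row~4, column~1 still carries its two remaining zeros among rows 4--7, and one of these rows also has a zero in column~2 or column~3; swapping columns~2 and~3 if needed places that zero in column~2, and (S1) then forces its first nonzero entry, in column~3, to be $1$. The main obstacle I anticipate is this last step: for certain zero distributions among rows 4--7, a naive choice of the disjoint triple may leave no remaining row with zeros in both column~1 and one of columns~2--3, and a short case analysis---exchanging the roles of which disjoint rows play rows~1--3, or permuting among the column blocks $\{2,3\}$, $\{4,5\}$, $\{6,7\}$ together with the singleton column~1---is needed to ensure that the fourth-row pattern is always realizable.
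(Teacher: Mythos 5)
Your argument is correct and follows the paper's proof essentially verbatim: two disjoint rows are produced by the same overlap analysis on the zero-columns of row 1, and a third by the same pigeonhole count on the four saturated columns. The ``obstacle'' you anticipate at the end is not a real one: the second zero of any row meeting column~1 lies in the zero-block of exactly one of the three mutually disjoint rows, and designating \emph{that} row as the third of the triple (carrying its column block to $\{2,3\}$) always realizes the displayed pattern --- the paper itself does not even address the fourth row explicitly, so your sketch is, if anything, the more complete of the two.
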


\begin{theorem} \label{thm:vec_7_5}
 There is no $UW(7,5)$.

\begin{proof}
Any $UW(7,5)$ must contain the above vectors, which we will show cannot be mutually orthogonal.

Taking the pairwise standard complex inner product of the vectors, we obtain the following system of equations:

$$
\begin{cases}
 1+a+b &= ~~0 \\
 1+c+d &= ~~0 \\
 1+h+k &= ~~0 \\
 1+f+g &= ~~0 \\
 \overline{b}+m+n &= ~~0 \\
 h\overline{c}+k\overline{d}+m\overline{f}+n\overline{g} &= ~~0
\end{cases}
$$

The first 4 equations imply $a,b,c,d,f,g,h,k\in\{e^{\pm i\frac{2\pi}{3}}\}$ where $a,c,h,f$ are the conjugates of $b,d,k,g$ repectively. We will now re-write the vectors above.

$$
\left(
\begin{array}{ccccccc}
 1& 1& 1& 1& 1& 0& 0 \\
 1& a& \overline{a}& 0& 0& 1& 1 \\
 1& 0& 0& c& \overline{c}& f& \overline{f} \\
 0& 0& 1& h& \overline{h}& m& n
\end{array}
\right)
$$

Now let's consider the inner product of the second and fourth vectors: $a+m+n=0$. Since $a\in\{e^{\pm i\frac{2\pi}{3}}\}$, we have that $m,n \in \{1,\overline{a}\}$ where $m \neq n$. The inner product of rows 3 and 4 is now the sum of 4 third roots of unity, which cannot be zero. Thus, no $UW(7,5)$ can exist.

\end{proof}
\end{theorem}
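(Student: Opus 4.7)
The plan is to start from the four forced rows given by the preceding lemma and extract orthogonality constraints on the unknowns $a,b,c,d,f,g,h,k,m,n$ by computing Hermitian inner products of each pair of rows. Rows with disjoint non-zero supports are automatically orthogonal, but several of the pairs share three common non-zero columns, producing equations of the form (unimodular) $+$ (unimodular) $+$ (unimodular) $= 0$. I would invoke the $m=3$ case of the orthogonality summary stated earlier in the paper, which forces any three unimodulars summing to zero to be a rotated copy of the third roots of unity. Applied to the pairs (row 1, row 2), (row 1, row 3), (row 2, row 3), and (row 1, row 4), this pins every one of the variables $a,b,c,d,f,g,h,k$ to $\{e^{2\pi i/3},e^{-2\pi i/3}\}$, with $b=\overline{a}$, $d=\overline{c}$, $g=\overline{f}$, and $k=\overline{h}$.

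Next I would turn to the interaction between rows 2 and 4. Their common support lies in columns $3$, $6$ and $7$, yielding a single equation of the shape $a+m+n=0$. Since $a$ is already a primitive cube root of unity, the $m=3$ case again constrains $m$ and $n$: together with $a$ they must form the full set of cube roots $\{1,e^{2\pi i/3},e^{-2\pi i/3}\}$. Hence $\{m,n\}=\{1,\overline{a}\}$, and in particular both $m$ and $n$ are themselves cube roots of unity.

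The finishing blow comes from the last remaining pair of rows, rows 3 and 4, which share all four columns $3,4,5,6,7$ (minus column $3$ where row 3 has a zero and row 4 has a $1$, so it contributes nothing) — more precisely the active common support is columns $4,5,6,7$. Their inner product is a sum of four terms, each of which is a product of two cube roots of unity, and therefore itself a cube root of unity. The obstacle to this sum being zero is arithmetic: a sum of four elements of $\{1,e^{2\pi i/3},e^{-2\pi i/3}\}$ can only vanish if the three roots appear with equal multiplicity, which forces the total count to be a multiple of $3$, not $4$. This contradicts orthogonality of rows 3 and 4 and completes the non-existence argument.

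The main obstacle is bookkeeping rather than conceptual: I need to check carefully which columns contribute to each inner product and track the conjugates correctly so that the reductions $b=\overline{a}$ etc.\ are applied consistently before the final count-of-four argument. Once those substitutions are in place, the final step is purely arithmetic and the contradiction is immediate.
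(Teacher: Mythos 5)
Your proposal is correct and follows essentially the same route as the paper: 3-orthogonality pins $a,b,c,d,f,g,h,k$ to primitive cube roots of unity in conjugate pairs, the row 2/row 4 product forces $\{m,n\}=\{1,\overline{a}\}$, and the row 3/row 4 product becomes a sum of four cube roots of unity, which cannot vanish since $x+y\omega+z\omega^2=0$ forces $x=y=z$ and $3\nmid 4$. The only difference is that you spell out this last arithmetic step, which the paper leaves as an assertion.
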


\section{Appendices}\label{sec:app}

\begin{remark}\label{rem:weight-5-fix}
 In the course of our work, we have discovered that one matrix of order 12 and one matrix of order 14 were missing from the classification of weighing matrices of weight 5 in \cite{seberry}. Later on, we learned that Harada and Munemasa have also made note of this. We direct the reader to \cite[Section 4]{weigh-5-fix} for the exact details and the matrices that were missed.
\end{remark}

Given here is a list of unit matrices of weight 4. Recall that all unit weighing matrices of weight 4 are equivalent to a weighing matrix that is made up of $W_5$, $W_6$, $W_7$, $W_8$ and $E_{2m}(x)$. We now give examples of $UW(n,4)$ with $n$ small. To save space, we will denote the above matrices by $5^*,6^*,7^*,8^*$ and $2m$, respectively (any number without a ``$~^*$'' are of the form $2m$). Note that we make no claim about equivalence  of the matrices, only that this list is an upper bound on the number of inequivalent matrices.

\renewcommand{\tabcolsep}{.15cm}
\begin{tabular}[t]{ccc}
\multicolumn{3}{c}{Composition of Unit Weighing Matrices of type $UW(n,4)$}\\
\renewcommand{\tabcolsep}{.2cm}
\begin{tabular}{|l|l|}
 \hline $UW(4,4)$ &
 \begin{tabular}{ll}
 1. & 4  \\
 \end{tabular}

 \\ \hline $UW(5,4)$ &
 \begin{tabular}{ll}
 1. & 5*  \\
 \end{tabular}

 \\ \hline $UW(6,4)$ &
 \begin{tabular}{ll}
 1. & 6*  \\
 2. & 6  \\
 \end{tabular}

 \\ \hline $UW(7,4)$ &
 \begin{tabular}{ll}
 1. & 7*  \\
 \end{tabular}

 \\ \hline $UW(8,4)$ &
 \begin{tabular}{ll}
 1. & 8*  \\
 2. & 4 4  \\
 3. & 8  \\
 \end{tabular}
 
 \\ \hline $UW(9,4)$ &
 \begin{tabular}{ll}
 1. & 5* 4  \\
 \end{tabular}
 
 \\ \hline $UW(10,4)$ &
 \begin{tabular}{ll}
 1. & 5* 5*  \\
 2. & 6* 4  \\
 3. & 4 6  \\
 4. & 10  \\
 \end{tabular}
 
 \\ \hline $UW(11,4)$ &
 \begin{tabular}{ll}
 1. & 5* 6*  \\
 2. & 5* 6  \\
 3. & 7* 4  \\
 \end{tabular} \\
 \hline
 
 \end{tabular}
& 
 \begin{tabular}{|l|l|}
 
 \hline $UW(12,4)$ &
 \begin{tabular}{ll}
 1. & 5* 7*  \\
 2. & 6* 6  \\
 3. & 6* 6*  \\
 4. & 8* 4  \\
 5. & 4 8  \\
 6. & 4 4 4  \\
 7. & 6 6  \\
 8. & 12  \\
 \end{tabular}
 
 \\ \hline $UW(13,4)$ &
 \begin{tabular}{ll}
 1. & 5* 8*  \\
 2. & 5* 4 4  \\
 3. & 5* 8  \\
 4. & 6* 7*  \\
5. & 7* 6  \\
\end{tabular}\\
\hline

\end{tabular}
&
\begin{tabular}{|l|l|}
\hline $UW(14,4)$ &
\begin{tabular}{ll}
1. & 5* 5* 4  \\
2. & 6* 8*  \\
3. & 6* 4 4  \\
4. & 6* 8  \\
5. & 7* 7*  \\
6. & 8* 6  \\
7. & 4 10  \\
8. & 4 4 6  \\
9. & 6 8  \\
10. & 14  \\
\end{tabular}\\
\hline

\end{tabular}
\end{tabular}

The following table gives the number of decompositions of $n$ without showing the decompositions.

\begin{tabular}{|l|l||l|l||l|l||l|l|}
\hline
\multicolumn{8}{|c|}{Number of Decompositions}\\
\hline
$n$ & \# & $n$ & \# & $n$ & \# & $n$ & \# \\
\hline 
1 & 0 & 26 & 91 & 51 & 2401 & 76 & 49960 \\ 
2 & 0 & 27 & 73 & 52 & 3445 & 77 & 46836 \\ 
3 & 0 & 28 & 128 & 53 & 3089 & 78 & 61251 \\ 
4 & 1 & 29 & 103 & 54 & 4379 & 79 & 57587 \\ 
5 & 1 & 30 & 173 & 55 & 3952 & 80 & 74976 \\ 
6 & 2 & 31 & 142 & 56 & 5563 & 81 & 70630 \\ 
7 & 1 & 32 & 236 & 57 & 5034 & 82 & 91488 \\ 
8 & 3 & 33 & 194 & 58 & 7015 & 83 & 86422 \\ 
9 & 1 & 34 & 313 & 59 & 6391 & 84 & 111485 \\ 
10 & 4 & 35 & 265 & 60 & 8852 & 85 & 105496 \\ 
11 & 3 & 36 & 424 & 61 & 8082 & 86 & 135445 \\ 
12 & 8 & 37 & 357 & 62 & 11087 & 87 & 128477 \\ 
13 & 5 & 38 & 555 & 63 & 10177 & 88 & 164323 \\ 
14 & 10 & 39 & 476 & 64 & 13884 & 89 & 156137 \\ 
15 & 7 & 40 & 737 & 65 & 12778 & 90 & 198849 \\ 
16 & 16 & 41 & 634 & 66 & 17296 & 91 & 189343 \\ 
17 & 11 & 42 & 961 & 67 & 15987 & 92 & 240258 \\ 
18 & 23 & 43 & 837 & 68 & 21517 & 93 & 229138 \\ 
19 & 17 & 44 & 1256 & 69 & 19937 & 94 & 289613 \\ 
20 & 34 & 45 & 1098 & 70 & 26647 & 95 & 276750 \\ 
21 & 25 & 46 & 1621 & 71 & 24789 & 96 & 348615 \\ 
22 & 46 & 47 & 1433 & 72 & 32967 & 97 & 333611 \\ 
23 & 36 & 48 & 2102 & 73 & 30731 & 98 & 418702 \\ 
24 & 68 & 49 & 1860 & 74 & 40607 & 99 & 401394 \\ 
25 & 52 & 50 & 2687 & 75 & 37987 & 100 & 502179 \\
\hline
\end{tabular}

\section*{Acknowledgments}
The authors thank the referees for the helpful comments which
considerably improved the presentation of the paper.

\end{document}